\newtheorem{theorem}{Theorem}[section]
\newtheorem{lemma}[theorem]{Lemma}
\newtheorem{claim}[theorem]{Claim}
\newtheorem{conjecture}[theorem]{Conjecture}
\renewcommand{\Re}{\mathbb{R}}
\DeclareMathOperator{\conv}{conv}
\DeclareMathOperator{\bd}{bd}
\DeclareMathOperator{\inter}{int}
\begin{document}

\title[Covering a square]{Covering a square by congruent squares}

\author[G. D\'osa]{Gy\"orgy D\'osa}
\author[Zs. L\'angi]{Zsolt L\'angi}
\author[Zs. Tuza]{Zsolt Tuza}

\address{Gy\"orgy D\'osa, Department of Mathematics, University of Pannonia, Veszpr\'em, Hungary}
\email{dosa.gyorgy@mik.uni-pannon.hu}
\address{Zsolt L\'angi, Bolyai Institute, University of Szeged, Szeged, Hungary, and\\
Alfr\'ed R\'enyi Institute of Mathematics, Budapest, Hungary}
\email{zlangi@server.math.u-szeged.hu}
\address{Zsolt Tuza, Department of Mathematics, University of Pannonia, Veszpr\'em, Hungary, and\\
HUN-REN Alfr\'ed R\'enyi Institute of Mathematics, Budapest, Hungary}
\email{tuza.zsolt@mik.uni-pannon.hu}

\thanks{Partially supported by the National Research, Development and Innovation Office, NKFI, K-147544 grant, the ERC Advanced Grant “ERMiD”, and the Projects no. TKP2021-NVA-09 and 2024-1.2.8-T\'ET-IPARI-CN-2025-00011, with the support provided by the Ministry of Innovation and Technology of Hungary from the National Research, Development and Innovation Fund.}

\subjclass[2020]{52C15, 52A40, 52A38}
\keywords{covering, congruent covering}

\begin{abstract}
The main goal of this paper is to address the following problem: given a positive integer $n$, find the largest value $S(n)$ such that a square of edge length $S(n)$ in the Euclidean plane can be covered by $n$ unit squares. We investigate also the variant in which the goal is to cover only the boundary of a square. We show that these two problems are equivalent for $n \leq 4$, but not for $n=5$. For both problems, we also present
% proofs of 
the solutions for $n=5$.
\end{abstract}

\maketitle

\section{Introduction}

Our main goal is to investigate the following problem from \cite{Friedcov}: given a positive integer $n$, find the
largest value $S(n)$ such that a square of edge length $S(n)$ in the Euclidean
plane can be covered by $n$ unit squares.

The case $n=3$ of this problem was proposed by Dudeney \cite{Dudeney} in 1931, 
%gyuri
and a sketch of its solution, namely that $S(3) = \sqrt{\frac{1+\sqrt{5}}{2}}$, can be found in a paper of Friedman and Paterson \cite{Friedcov}. In their paper, they also give a short argument to show that $S(2)=1$, and remark that a similar method can be applied to prove that $S(5)=2$ and $S(10)=3$.
Complete proofs for the last two equalities were published by Januszewski 
\cite{Janu09} three years later. Some efficient coverings with larger values of $n$, in particular for $n=7$ and $n=8$, are given in \cite{Friedcov}.

%A special case
A weaker version of this problem was proposed by Soifer, see e.g.\ \cite{Soifer06}: for a positive integer $n$, find the smallest number $\Pi(n)$ such that $\Pi(n)$ unit squares can cover a square of edge length $n + \varepsilon$; he gave asymptotic estimates on this number as $n \to \infty$. To our knowledge, the best known bounds in this direction are $\Pi(1)=3$; $6\leq\Pi(2)\leq7$; and $11\leq\Pi(3)\leq 14$, where the inequalities $6\leq\Pi(2)$ and $11\leq\Pi(3)$ follow from the result in \cite{Janu09}. 

A different square covering problem was investigated in \cite{Balogh25}. There, the authors consider squares of sizes
$1,2,\dots,n$, one of each size, and are interested in covering the largest
possible square with translates of these squares. For small values of $n$, this problem is solved by a solver, for larger values of $n$ by a heuristic, and for very large values of $n$ by an asymptotically optimal algorithm.

A higher-dimensional related question was posed by W. Kuperberg \cite{Kuperberg}. He asks
for the smallest number $C(d)$ of unit cubes in the $d$-dimensional Euclidean space that cover a cube of edge length $1+\varepsilon$. For $d=1$, this question is trivial and $C(1)=2$, and for $d=2$ we have $C(2)=\Pi(1) = 3$.
Kuperberg conjectures that $C(d)=d+1$ holds in any dimension.

%Contrary to
As a dual version of the above, there is a rich literature dealing with not covering,
but packing squares into a larger square. One of the first publications
regarding this problem is a paper of Erd\H{o}s and Graham in 1975 \cite{EG75}, who proposed the problem of finding a square with minimum edge length $s(n)$ that can contain a packing of $n$ unit squares for a positive integer $n$.

In fact, Erd\H{o}s considered the problem much earlier (as we are informed by
\cite{Praton}). 
%\H{o}s
He showed that the sum of
circumferences of two non-overlapping squares
(of any size) inside a unit square is at most $4$. This observation was given by Erd\H{o}s around
1932 as a problem for high-school students in Hungary.
This is actually the
simplest case of a more general conjecture in \cite{EG75}, which states that
the total circumference
of any $k^{2}+1$ non-overlapping squares inside a unit square is at most $4k$.

The survey paper \cite{Friedpack} of Friedman, published in 1998, presents many configurations that might be optimal for the problem in \cite{EG75},
and also gives some results. Despite the work of many mathematicians (e.g.\ Stromquist, Cottingham, Wainwright, Stenlund, Trump, etc.), the problem is still open for almost all values of $n$. We recommend the paper \cite{Friedpack} to the interested reader.

Consider $n$ squares of edge lengths $1, 2, \ldots, n$, respectively. The problem of finding the smallest square that can contain a packing of translates of these squares is discussed on the page of A005842 on the website of integer sequences \cite{1}. This question was originated in the
column written by Martin Gardner in 1966 \cite{Gardner66} and popularized further by him also a decade later in \cite{Gardner75}.
%For this problem, 
Based on earlier works, Gardner \cite{Gardner75} listed the best possible (tight)
results up to $n=17$. 
%An upper bound ($UB$) is best possible if it meets a valid lower bound.
Some further research on this problem can be found e.g.\ in the
publications \cite{Korf03, Korf04, Hougardy12}.
The current champion results of upper bounds $UB(n)$ up to $n=56$ can be found on the
\cite{1} website; most of
those values are tight. However, tightness is still not proved for the cases
$n=38,40,42,48,52,53$ and $55$. Hougardy \cite{Hougardy12} showed that
$UB(28)=89$, $UB(32)=108$, $UB(33)=113$, $UB(34)=118$, and $UB(47)=190$.

The paper of Balogh et al.\ \cite{Balogh22} considers this problem where $n$ is large. 
%gyuri
Their result is the first one with asymptotic-type guarantees to the problem of packing consecutive squares into a square. In the paper both
unrestricted axis-parallel packings and guillotine-type packings are
considered. For both cases the paper introduces asymptotically optimal greedy algorithms.

A very unique special case of the problem is $n=24$. Watson mentioned already
in 1918 in \cite{Watson1918} that $n=24$ is the only case (different from
$n=1$) where the sum of the areas of the consecutive squares from $1$ to $n$,
i.e. $\frac{n(n+1)(2n+1)}{6}$ is a perfect square, namely $70\ast70$. However, the
optimum is $71$ as proved by Korf \cite{Korf04} applying a computer-aided
proof. The first purely theoretical proof was recently published by Sgall et
al.\ \cite{Sgall}.

A related question in the literature is the problem of packing an arbitrary
set of squares with a total area of at most $1$ into a rectangle of minimum
area, see \cite{MM67} and \cite{Kleitman75}. Possibly the best result so far
is given in \cite{Hougardy11}, proving that a rectangle with an area smaller
than $1.4$ suffices to encompass all those squares.

A similar question, where the container is a square, was answered in
\cite{Buchwald16}. Here the authors proved that any set of squares can be packed
into the unit square if (i) the total area of the squares is at most $5/9$ and
(ii) none of the squares has edge length greater than $1/2$. They also proved
that $5/9$ is the best possible such value, under the condition (ii).

%\todo[inline]{ZS -- a következő bekezdés 5.\ sorában el lehetne-e kerülni a "completeness ... complete" szóismétlést?}

In this paper, we revisit the problem of determining $S(n)$ for small values of $n$. We also propose a related problem: for any positive integer $n \geq 2$, determine
the largest value $S_{\bd}(n)$ such that the \emph{boundary} of a square of edge length $S_{\bd}(n)$ can be covered with $n$ unit squares. Here, we clearly have $S(n) \leq S_{\bd}(n)$ for all values of $n$. In the paper, for completeness, we also give more detailed proofs of the equalities $S(2)=1$ and $S(3) = \sqrt{\frac{1+\sqrt{5}}{2}}$. Our arguments follow the sketches of the proofs of the same statements in \cite{Friedcov}. We present the proofs to show that $S_{\bd}(n)=S(n)$ holds for $n=2,3$. We observe further that also $S(4)=S_{\bd}(4)=2$ is valid; moreover, we prove that $S(5)=2 < S_{\bd}(5)$. Based on the methods in our proofs, we also determine the value of $S_{\bd}(n)$ for all positive integers $n \equiv 0,1$ mod $4$, including in particular the value of $S_{\bd}(5)$. In addition, we present a proof of the statement $S(5)=2$ different from (and possibly simpler than) the one in \cite{Janu09}.

We raise the following conjecture.

%\todo[inline]{ZS -- az utolsó mondatot új sorba tettem, de visszacsinálhatjátok ha nem tetszik így.
%Másrészt talán "raise" vagy "pose" vagy "propose" szokásosabb kifejezés egy sejtés felvetésére, mint a "make".}

\begin{conjecture}
No square of edge length $x$ with $x > 2$ can be covered by six unit squares. In other words, $S(6)=2$, or equivalently $\Pi(2)=7$.
\end{conjecture}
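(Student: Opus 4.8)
The plan is to establish the equivalent lower bound $\Pi(2) \geq 7$, i.e.\ that no square $Q$ of edge length $x = 2 + \varepsilon$ admits a covering by six unit squares. The naive estimates are useless here: comparing areas gives only $x \leq \sqrt{6}$, and bounding the covered perimeter (a unit square meets a line in a segment of length at most $\diam = \sqrt{2}$, and meets two edges at a corner of $Q$ in total length at most $2$) gives at best $x \leq 3$. So the argument must track corner coverage, edge coverage and interior coverage \emph{simultaneously}, in the spirit of the proof of $S(5)=2$.

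First I would pin down the corner structure. Since $x > 2 > \sqrt{2}$, any two corners of $Q$ are farther apart than the diameter $\sqrt{2}$ of a unit square, so no unit square can contain two corners of $Q$; hence four of the six squares are committed, one to each corner, leaving only two \emph{free} squares. The next step is a quantitative \emph{reach lemma}: a unit square pinned at a corner $c$ of $Q$ can cover an initial segment of an edge at $c$ of length up to $\sqrt{2}$, but only by tilting, and greater reach forces greater tilt. Crucially, a square pinned at a corner with its reach along one edge exceeding $1$ necessarily spills area outside $Q$; I would make this reach-versus-spill trade-off precise as a monotone inequality. Combined with the observation that a single free square, being convex of diameter $\sqrt{2}$, can substantially assist the middle of at most one edge, a pigeonhole over the four edges then confines how the two free squares can help, forcing most of the boundary of the long edges to be carried by tilted corner squares.

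The decisive ingredient is the interior, and it is essential to understand why boundary coverage alone cannot give $x \leq 2$. Since $S_{\bd}$ is nondecreasing in $n$ and $S_{\bd}(5) > 2$, we have $S_{\bd}(6) > 2$: there genuinely exist arrangements of six unit squares covering the \emph{boundary} of a square larger than $2\times 2$, precisely by using tilted corner squares that reach close to $\sqrt{2}$ along single edges. This is the exact phenomenon that defeats the naive bounds. The area wasted outside $Q$ by such tilts is invisible to a boundary-only count but fatal once the interior must be filled. The plan is therefore to couple the reach lemma with an \emph{area accounting}: the total waste (area spilled outside $Q$ plus overlaps) cannot exceed the budget $6 - x^2 < 2$ when $x > 2$. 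I would show that the tilts required to cover the full boundary of a square with $x > 2$, while also covering a few interior test points (the center of $Q$ and points along its diagonals), force the total spill to exceed this budget.

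The main obstacle is the continuum of admissible positions and rotation angles of the six squares, which makes a single closed-form inequality unwieldy. I expect the proof to require reducing this continuum to finitely many combinatorial \emph{contact types}, classifying which square covers which corner, which edges of $Q$ each square meets, and the cyclic order of contacts, and then, for each type, solving the resulting finite system of polynomial inequalities in the placement and angle parameters to force $x \leq 2$. Verifying that the list of contact types is exhaustive, and eliminating every non-extremal type (especially those involving two tilted corner squares on a common edge assisted by a free square), is the crux; as with the tight packing values of Korf and Hougardy, a fully rigorous treatment may ultimately need to be computer-assisted, with $x = 2$ emerging as the unique equality configuration.
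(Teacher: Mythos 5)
First, a point of order: the statement you were asked to prove is stated in the paper as a \emph{conjecture}. The paper contains no proof of $S(6)=2$; the authors explicitly leave it open, so there is no in-paper argument to compare yours against, and the only fair question is whether your proposal constitutes a proof on its own. It does not, though several of its ingredients are sound and match the paper's toolkit for $n\leq 5$: since any two vertices of $S$ are at distance greater than $\sqrt{2}$, four squares are indeed pinned to the four corners, leaving two free; the bound of $2$ on the boundary length coverable by a corner square is exactly the $x+l(x)\leq 2$ estimate from Lemma~\ref{lem:mainfor3}; and your observation that boundary-only arguments \emph{cannot} settle $n=6$ is correct and important, since $S_{\bd}$ is nondecreasing in $n$ and Theorem~\ref{thm:bd}$(iii)$ gives $S_{\bd}(6)\geq S_{\bd}(5)\approx 2.072>2$. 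This correctly diagnoses why the methods proving $S(5)=2$ do not extend mechanically.

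The gaps, however, are exactly at the load-bearing steps. \emph{(a)} The ``reach-versus-spill'' lemma is announced but never proved; it is provable (a tilted unit square whose intersection with a sideline of $S$ is a nondegenerate segment must cross that line, since a supporting line meets a tilted square only in a vertex, and the cut-off region then has area at least $\frac{1}{2}\sqrt{x^2-1}$ when the reach is $x>1$), but you supply neither the statement's precise form nor the monotonicity you invoke. \emph{(b)} The area accounting cannot close in the form you state. The waste budget spill-plus-overlap $\leq 6-x^2$ is \emph{exactly} tight at $x=2$: take any covering of the $2\times 2$ square by five unit squares (Theorem~\ref{thm:biggern}) and add a redundant sixth square, giving waste exactly $2=6-2^2$ with zero spill needed. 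So at $x=2+\varepsilon$ you must show the forced waste exceeds $2-4\varepsilon-\varepsilon^2$, a first-order stability estimate around an equality configuration; no ``monotone inequality'' of the kind you sketch yields this, and this delicate perturbative analysis is precisely where the difficulty lives. \emph{(c)} The reduction to finitely many contact types, and its exhaustiveness, is asserted rather than carried out; note that with six squares and only five pairwise-separated test points (four vertices and the center --- a vertex and an adjacent midpoint are at distance $x/2<\sqrt{2}$ and \emph{can} share a square, as happens in the paper's own Case 3 for $n=5$), one square is entirely unpinned, so the important-point pigeonhole that drives the $n=5$ proof degenerates, and the deferred case analysis is the open problem itself. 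In sum: a plausible research program, honestly flagged as possibly computer-assisted at its crux, but not a proof --- which is consistent with the paper's decision to state $S(6)=2$ only as a conjecture.
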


Our main results are the following. In their formulation, we set $\varphi= \frac{1+\sqrt{5}}{2}$, and recall the well-known fact that $\varphi$ is the golden ratio.

%\todo[inline]{ZS -- nem sikerült megtalálnom, hol mondtuk korábban hogy $\phi$ az aranymetszés aránya. Tehát nem biztos hogy "recall" a megfelelő szó ide.} Zsolti: ugy ertettem, hogy az irodalombol. Picit kiegeszitettem, de ha van jobb javaslat, irjatok at.

\begin{theorem}\label{thm:smallern}
We have $S_{\bd}(2)=S(2)=1$ and $S_{\bd}(3)=S(3)=\sqrt{\varphi}$. Furthermore, if $\mathcal{F}$ is a family of $n$ unit squares covering the boundary of a square $S$ of edge length $S_{\bd}(n)$, then the following holds.
\begin{itemize}
\item If $n=2$, then at least one element of $\mathcal{F}$ is a translate of $S$.

%\todo[inline]{ZS -- javaslat "is a translate of $S$" helyett: 

%is identical to $S$.} Zsolti: Azt hiszem, ugy nem igaz az allitas, ugyanis ha mindket fedo negyzet eltolt, akkor le lehet fedni ugy, hogy S ket peldanyat oldalakkal parhuzamosan ellentetes iranyban picit eltoljuk.

\item If $n=3$, then the elements of $\mathcal{F}$, together with $S$, form a configuration congruent to the one in Figure~\ref{fig:3squares}. 
\end{itemize}
\end{theorem}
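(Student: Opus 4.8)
The plan is to prove each equality by sandwiching. Since the excerpt already records $S(n)\le S_{\bd}(n)$, it suffices to exhibit an explicit covering of the \emph{full} square of the claimed edge length (a lower bound on $S(n)$) and, separately, to show that the \emph{boundary} of any strictly larger square cannot be covered by $n$ unit squares (an upper bound on $S_{\bd}(n)$). For $n=2$ this sandwich is $1\le S(2)\le S_{\bd}(2)\le 1$, and for $n=3$ it is $\sqrt\varphi\le S(3)\le S_{\bd}(3)\le\sqrt\varphi$, so in both cases the two problems collapse to the common value. The lower bounds are the easy half: for $n=2$ a single translate of $S$ already covers $S$, so $S(2)\ge 1$ trivially. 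For $n=3$ I would write down the classical golden-ratio configuration, placing $S=[0,s]^2$ with $s=\sqrt\varphi$, one axis-parallel unit square in a corner and the remaining two unit squares tilted symmetrically about a diagonal of $S$, and verify by direct computation that their union contains $S$; the relation that makes the coverage just close up is the quadratic $s^2=\varphi$, which is where the golden ratio enters. This is exactly the covering of Figure~\ref{fig:3squares}.

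The upper bound on $S_{\bd}(n)$ is the heart of the matter, and the main tool is a corner-counting argument. If $S$ has edge length $x>1$, then two \emph{opposite} corners of $S$ lie at distance $x\sqrt2$, exceeding $\sqrt2$, the diameter of a unit square; hence no unit square can cover two opposite corners of $S$. Consequently the set of $S$-corners covered by any single member of $\F$ is either a single corner or an \emph{adjacent} pair, and an adjacent pair forces, by convexity, that the square contains the whole edge of $S$ joining them. For $n=2$ this means the only way to cover all four corners is for the two squares to cover two opposite edges of $S$, after which I would show that the two perpendicular side edges connecting them cannot both be fully covered once $x>1$. For $n=3$, covering four corners by three such sets forces at least one square to contain a full edge of $S$ (three singletons reach only three corners, and a set of three corners would contain an opposite pair).

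From here the hard part begins: in each admissible corner-assignment pattern I would estimate, as a function of the tilt angle $\theta$, how much of the two side edges a tilted unit square pinned to cover a full edge (or a single corner) of $S$ can actually reach. This amounts to controlling the length of the intersection of a tilted unit square with a line parallel to a side of $S$, and then combining several such estimates so that the uncovered gap on the side edges is forced to be positive whenever $x$ exceeds the claimed bound. The optimization over $\theta$ is what I expect to be the principal obstacle, together with the bookkeeping needed to rule out the degenerate assignment patterns; the extremal estimate should yield $x\le\sqrt\varphi$ for $n=3$ (and $x\le 1$ for $n=2$), with the golden ratio emerging as the optimizing angle.

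Finally, for the rigidity statements I would run the inequalities backwards in the extremal case. For $n=2$ with $x=1$, either some square covers two opposite corners of $S$ — and since these are the endpoints of a full diagonal of a unit square, that square must coincide with $S$ and is therefore a translate of $S$ — or the forced opposite-edge configuration, chased to equality, makes one of the two squares axis-parallel, again a translate of $S$. For $n=3$ with $x=\sqrt\varphi$, equality throughout the angle optimization pins down the tilt and the positions of all three squares up to the symmetries of $S$, producing precisely the configuration of Figure~\ref{fig:3squares}.
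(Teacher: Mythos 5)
Your overall architecture matches the paper's (sandwich via explicit lower-bound configurations plus an upper bound on boundary coverings, with a corner/pigeonhole analysis of which squares meet which corners), but the proposal stops short of the two quantitative steps that constitute the actual proof, and you flag them yourself as undone (``I would show\ldots'', ``I expect\ldots''). For $n=2$, the statement you defer --- that once the two squares contain opposite edges, the remaining perpendicular sides cannot be jointly covered for $x>1$ --- is precisely where the work lies. The paper does this by a per-square arc-length estimate: if a unit square making angle $\alpha\in(0,\pi/2)$ with $S$ contains a full edge of the $\lambda$-square $S$, the reaches $A_1(\alpha),A_2(\alpha)$ onto the adjacent sides given in (\ref{eq:A1A2}) satisfy $\lambda-A_1-A_2\ge\frac{(1-\cos\alpha)(1-\sin\alpha)}{\cos\alpha\sin\alpha}>0$ (inequality (\ref{eq:mainforn2})), so each non-translate square covers a boundary arc of length strictly less than $2\lambda$ and the two together miss part of the perimeter $4\lambda$. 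This single estimate simultaneously gives the bound $S_{\bd}(2)\le 1$ and the translate rigidity, whereas your sketch would still need an equivalent computation, and your ``opposite corners at distance $x\sqrt2>\sqrt2$'' dichotomy only applies for $x>1$, so the rigidity at $x=1$ needs the strict form of such an inequality rather than corner counting.

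For $n=3$ the gap is larger: the missing idea is the paper's Lemma~\ref{lem:mainfor3} on maximally embedded $L$-shapes, which shows that a unit square covering a corner of $S$ together with legs of lengths $x$ and $y$ on the two incident sides must satisfy $y\le l(x)=x-x\sqrt{x^2-1}$. Your proposed control quantity --- the length of the intersection of a tilted unit square with a single line parallel to a side --- is the wrong invariant: what must be bounded is the simultaneous reach along \emph{two} perpendicular sides at a corner, which is exactly what $l(x)$ encodes and what makes the subsequent one-variable optimization tractable. With it, the problem reduces to showing $f(\alpha)=l\bigl(\sqrt{\varphi}-A_1(\alpha)\bigr)+l\bigl(\sqrt{\varphi}-A_2(\alpha)\bigr)\le\sqrt{\varphi}$ on $\bigl[\arccos\frac{1}{\sqrt{\varphi}},\frac{\pi}{4}\bigr]$ with equality only at the endpoint $\alpha=\arccos\frac{1}{\sqrt{\varphi}}$ --- an analysis the paper itself carries out partly by computer-assisted evaluation of $f'$ (Figure~\ref{fig:Maple}), so this ``principal obstacle'' you defer is genuinely nontrivial and also underlies the rigidity claim you assert by ``running the inequalities backwards''. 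Your pigeonhole observation that some square must contain a full edge of $S$, and your lower-bound constructions, are correct and agree with the paper; but without the arc-length inequality for $n=2$ and the $L$-shape lemma plus the extremal analysis of $f$ for $n=3$, the proposal is a plan rather than a proof.
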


\begin{figure}[ht]
\begin{center}
\includegraphics[width=0.4\textwidth]{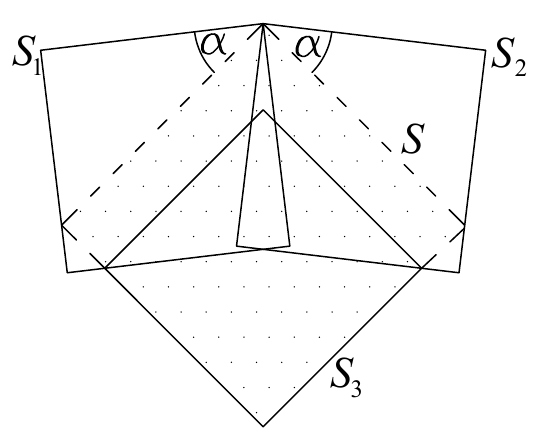}
\caption{Three unit squares $S_1, S_2, S_3$ covering the boundary of a square (and also the whole square) $S$ of edge length $\sqrt{\varphi}$. The square 
%gyuri
$S$ is denoted as a region with dashed boundary; $S_3$ is a homothetic copy of $S$ with a vertex of $S$ as homothety center. The squares $S_1$ and $S_2$ are rotated copies of $S_3$ by angles $\pm \alpha$, where $\alpha = \arccos \frac{1}{\sqrt{\varphi}}$.}
  \label{fig:3squares}
\end{center}
\end{figure}

\begin{theorem}\label{thm:biggern}
We have $S(5)=2$. Furthermore, if $\mathcal{F}$ is a family of $5$ unit squares, covering a square $S$ of edge length $2$, then at least one element of $\mathcal{F}$ has sides parallel to those of $S$. 
\end{theorem}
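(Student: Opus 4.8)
The plan is to establish both assertions at once. The lower bound $S(5)\ge 2$ is immediate: four unit squares tile $[0,2]^2$, and a fifth may be added harmlessly. So let $S$ be a square of edge length $x\ge 2$ covered by unit squares $Q_1,\dots,Q_5$, set up as follows. Since $x\ge 2>\sqrt2$ and every unit square has diameter $\sqrt2$, no $Q_j$ contains two vertices of $S$; hence the four vertices are covered by four distinct squares, say $Q_1,\dots,Q_4$ (one per vertex), and I write $Q_5$ for the remaining one. The center $c$ of $S$ lies at distance $x/\sqrt2\ge\sqrt2$ from every vertex, so for $x>2$ no corner square can reach $c$ and therefore $c\in Q_5$; at $x=2$ a corner square covers $c$ only if $c$ is the vertex of $Q_i$ opposite the covered vertex of $S$, that is, only if $Q_i$ is the axis-parallel square with diagonal joining that vertex to $c$.

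The first tool is a corner-coverage estimate. If $Q_i$ covers a vertex $v$ of $S$ and cuts the two edges issuing from $v$ in segments of lengths $p_i$ and $q_i$ (measured from $v$), then the far endpoints of these segments lie in $Q_i$, hence are at distance at most $\diam Q_i=\sqrt2$; thus $p_i^2+q_i^2\le2$, giving $p_i+q_i\le 2$ with equality exactly when $p_i=q_i=1$, which forces $Q_i$ to be axis-parallel. Because a corner square cannot reach a non-adjacent edge (distance $\ge x\ge 2$), each edge of $S$ is covered only by the two corner squares at its endpoints, possibly assisted by $Q_5$. If $Q_5$ meets no edge, the four ``no gap'' inequalities $p_1+p_2\ge x$, $q_2+p_3\ge x$, $q_3+p_4\ge x$, $q_1+q_4\ge x$ sum to $\sum_i(p_i+q_i)\ge 4x$, while always $\sum_i(p_i+q_i)\le 8$; this already forces $x\le 2$, with equality only when every $p_i=q_i=1$, i.e. when all four corner squares are axis-parallel.

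The crux is that covering the four edges is strictly weaker than covering $S$: the interior must be filled as well, and this over-commits $Q_5$. The geometric input I will use is that a corner square that is not axis-parallel must spill outside $S$ at its vertex. Indeed, if a unit square has a vertex of $S$ as one of its own vertices and lies in the corresponding right-angle wedge, then its two edges at that vertex point into the wedge and, being perpendicular, must coincide with the wedge's sides, so the square is axis-parallel; any tilt therefore produces positive spill. Since a corner square $Q_i$ lies in the disk of radius $\sqrt2$ about its vertex $v_i$, and that disk meets $S$ in a region of area $\pi/2>1$, a tilted $Q_i$ (covering area strictly less than $1$ inside $S$) necessarily leaves an uncovered two-dimensional pocket within distance $\sqrt2$ of $v_i$. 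The part of this pocket lying deep in the wedge, away from the two edges, is beyond the reach of every other corner square (whose vertices are at distance $\ge 2$), and so must be serviced by $Q_5$. Combined with the obligation that $Q_5$ contain $c$ --- forced for $x>2$, and at $x=2$ whenever no corner square is axis-parallel --- this pins $Q_5$ and leaves it unable to service such pockets at two or more corners simultaneously, so the covering fails unless the bookkeeping collapses to the no-gap case of the previous paragraph.

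Assembling these gives the theorem. Every placement of $Q_1,\dots,Q_5$ either falls into the no-gap case, where the budget $\sum_i(p_i+q_i)\le 8$ forces $x\le 2$, or creates an interior pocket that the center-bound square $Q_5$ cannot absorb once $x>2$; this yields $S(5)\le 2$. For the structural claim at $x=2$: if some corner square covers $c$ it is axis-parallel by the equality analysis and we are done; otherwise all four corner squares are tilted, each leaves a deep pocket that only $Q_5$ could fill while also covering $c$, and ruling this out returns us to the no-gap case, in which all four corner squares are axis-parallel. I expect the main obstacle to be exactly the reach estimate of the third paragraph: showing, uniformly over the tilt angle $\theta$ and the position of $Q_5$ (whose bounding box has side $\cos\theta+\sin\theta\le\sqrt2$ and which must reach height $x/2\ge 1$ to contain $c$), that such a square cannot simultaneously repair the deep pockets left near two corners. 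This is the one place where the exact threshold $x=2$ is genuinely used, and I anticipate it will require a short case analysis on the location of $Q_5$ relative to $c$ and to the pockets, in the spirit of the extremal configuration behind $S(3)=\sqrt\varphi$.
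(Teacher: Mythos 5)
Your first two paragraphs are sound and essentially parallel Case 1 of the paper's proof: the lower bound, the observation that no unit square covers two vertices of $S$, the estimate $p_i+q_i\le\sqrt{2(p_i^2+q_i^2)}\le 2$ with equality only for $p_i=q_i=1$ (forcing $Q_i$ axis-parallel, a weaker but here sufficient substitute for the sharp bound $x+l(x)\le 2$ of Lemma~\ref{lem:mainfor3}), and the no-gap summation $4x\le\sum_i(p_i+q_i)\le 8$ when $Q_5$ misses the boundary. The genuine gap is the entire complementary case, where $Q_5$ meets a side of $S$: there the perimeter bookkeeping only yields $4x\le 8+\sqrt{2}$, since $Q_5$ can contribute a chord of length up to $\sqrt{2}$ to one side, and closing the gap from $2+\sqrt{2}/4$ down to $2$ is delegated entirely to the ``pocket'' argument of your third paragraph --- which is a plan, not a proof, as you yourself concede when you anticipate a further, unsupplied case analysis.

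Two specific steps in that paragraph fail as stated. First, the area-deficit observation carries no information about tilt: the axis-parallel corner square also covers area exactly $1<\pi/2$ of the intersection of $S$ with the disk of radius $\sqrt{2}$ about $v_i$, so ``covers area strictly less than $1$ inside $S$'' cannot by itself produce an uncovered pocket, and you give no argument locating the uncovered set ``deep in the wedge'': uncovered slivers near the edges, at distance at least $x-\sqrt{2}$ from $v_i$, are within reach of the adjacent corner squares, so the reduction of the repair burden to $Q_5$ alone is unjustified. Second, even granting pockets near the diagonals (where points are indeed at distance at least $\sqrt{2}$ from the other vertices once $x\ge 2$), the claim that a center-containing $Q_5$ cannot service two of them is asserted, not proved, and it is genuinely delicate: at $x=2$, if each tilted corner square covers almost $\sqrt{2}$ of its half-diagonal, the four would-be pockets all sit arbitrarily close to $c$ and fit comfortably inside one unit square, so any correct argument must couple the interior deficit with the edge deficit quantitatively. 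The paper avoids this trap with a different device: it tracks nine ``important points'' (the vertices, the edge midpoints, and $c$), shows each square covers exactly one vertex or the center and at most two important points, and then either reduces to the $n=2$ case of Theorem~\ref{thm:smallern} applied to the quarter square $\conv\{v_4,m_4,c,m_3\}$ (when $S_5$ covers $c$ and a midpoint), or derives a contradiction with the arc-length bound following (\ref{eq:mainforn2}) via a length count on $[c,m_2]\cup[v_3,m_3]$ (when $S_5$ meets $\bd(S)$ but covers only $c$). Some forcing mechanism of this kind --- midpoints or half-diagonals as witnesses that overload $S_5$ --- is precisely the missing idea your sketch would need.
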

%Biralo 2

\begin{theorem}\label{thm:bd}
The following exact values are valid.
\begin{enumerate}
\item[$(i)$]
$S_{\bd}(n+4)=S_{\bd}(n)+\sqrt{2}$ for every $n \geq 4$.
\item[$(ii)$] $S_{\bd}(4)=2$, and if $\mathcal{F}$ is a family of $4$ unit squares covering 
the boundary of a 
square $S$ of edge length $2$, then every element of $\mathcal{F}$ is homothetic to $S$.
\item[$(iii)$] $S_{\bd}(5)= \frac{1}{2}\sqrt{2}\sqrt{\sqrt{13-8\sqrt{2}}+1} + 1 \approx 2.072$. Furthermore, if $\mathcal{F}$ is a family of $5$ unit squares, covering
the boundary of a 
square $S$ of edge length $S_{\bd}(5)$, then the configuration is congruent to the one in Figure~\ref{fig:5bdoptimal}.
\end{enumerate}
\end{theorem}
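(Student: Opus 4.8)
The plan is to reduce all three parts to one ``corner lemma'' describing how far along its two incident edges a unit square covering a corner of $S$ can reach. Put the corner at the origin with the two edges of $S$ along the positive axes. A unit square of orientation $\theta$ admits a placement containing the corner together with the segments $[0,x]$ and $[0,y]$ on the two edges precisely when its two support widths obey $\max\{x\cos\theta,\,y\sin\theta\}\le 1$ and $x\sin\theta+y\cos\theta\le 1$. Maximising over $\theta\in[0,\pi/2]$, I would show that the attainable reach pairs $(x,y)$ fill the $x\leftrightarrow y$ symmetric region whose boundary consists of the arc $y=x\bigl(1-\sqrt{x^2-1}\bigr)$ for $1\le x\le\sqrt 2$ together with its mirror image, the two arcs meeting at $(1,1)$; in particular $\max\{x,y\}\le\sqrt 2$ and $x+y\le 2$, with $x+y=2$ only at $(1,1)$. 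The one extra ingredient is that a unit square meets a line in a chord of length at most $\sqrt 2$, with equality for a $45^\circ$ diamond.

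For $(ii)$, the covering of $[0,2]^2$ by four axis-parallel unit squares gives $S_{\bd}(4)\ge 2$. Conversely, suppose four unit squares cover $\partial S$ for $S$ of edge $s>\sqrt 2$. No square can contain two corners (they lie more than $\sqrt 2$ apart), so the four squares cover the four corners bijectively, the $i$-th reaching lengths $(x_i,y_i)$ on its two edges; and each edge, being out of reach of the two far squares, is covered only by the two squares at its endpoints, whence the two reaches on it sum to at least $s$. Adding the four edge inequalities gives $\sum_i (x_i+y_i)\ge 4s$, while the corner lemma gives $x_i+y_i\le 2$, so $s\le 2$. Equality forces $(x_i,y_i)=(1,1)$ for every $i$, i.e.\ every square is axis-parallel and hence homothetic to $S$.

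For $(iii)$ I would first fix the combinatorial type; write $s_0$ for the claimed value of $S_{\bd}(5)$. Again no square covers two corners, so with five squares exactly one of two situations occurs: $(a)$ each corner is covered by a single square and a fifth ``free'' square covers no corner, or $(b)$ one corner is covered by two squares and each of the other three by one. In case $(a)$ the free square can shorten the demand on just one edge, by at most $\sqrt 2$; writing the four edge inequalities and eliminating the reaches through the corner lemma (the symmetric arrangement being extremal) yields a strictly smaller value, about $2.047$, so case $(a)$ is not extremal. Case $(b)$ is the extremal one: the doubled corner reaches $(\sqrt 2,\sqrt 2)$ using two diamonds, the opposite corner is forced to the unique feasible point $(1,1)$, and each of the two remaining corners sits on the frontier arc with reaches $x=s-\sqrt 2$ and $y=s-1$. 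The arc relation $y^2(y^2-1)=(y-x)^2=(\sqrt 2-1)^2=3-2\sqrt 2$ is a quadratic in $y^2$ whose root $y^2=\tfrac12\bigl(1+\sqrt{13-8\sqrt 2}\bigr)$ gives $s=1+y=s_0$, and equality rigidifies the whole picture into Figure~\ref{fig:5bdoptimal}. The main obstacle is exactly this case analysis: one must prove the counterintuitive fact that the doubled-corner configuration $(b)$ beats the symmetric four-plus-one configuration $(a)$, and one must dispose of all asymmetric placements and of the awkward subcase where the extra square straddles a corner region without covering the corner itself. The non-convexity of the corner region (the kink at $(1,1)$) is what makes the extremal configuration unexpected and the optimisation genuinely delicate.

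Finally $(i)$. For the lower bound $S_{\bd}(n+4)\ge S_{\bd}(n)+\sqrt 2$ I would start from an optimal covering of $\partial S$ with $S$ of edge $S_{\bd}(n)$, cut the plane along the two diagonals of $S$ into four corner sectors, translate the sector at each corner outward along its diagonal by $\tfrac{\sqrt 2}{2}(\pm1,\pm1)$ so that the edge length grows by $\sqrt 2$, and fill each of the four mid-edge gaps thus opened---each a segment of length $\sqrt 2$---with a $45^\circ$ diamond, using $n+4$ squares in all. For the reverse inequality, given a covering of $\partial S'$ of edge $b$ with $n+4$ squares, I would use the corner lemma to localise four squares controlling the four corner regions, delete them, and argue that the remaining $n$ squares cover the boundary of the concentric square of edge $b-\sqrt 2$, giving $b\le S_{\bd}(n)+\sqrt 2$. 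The care needed is in the bookkeeping for squares that straddle the cutting diagonals (lower bound) or that are shared between a corner region and the rest (upper bound); this is routine relative to the optimisation in $(iii)$, but must be carried out uniformly for every $n\ge 4$.
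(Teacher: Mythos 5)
Your corner lemma, your proof of $(ii)$, and your case $(b)$ computation in $(iii)$ all match the paper: the lemma is its Lemma~\ref{lem:mainfor3} with $l(x)=x-x\sqrt{x^2-1}$, and your relation $y^2(y^2-1)=(\sqrt{2}-1)^2$ is equivalent to the paper's defining equation $x+1=l(x)+\sqrt{2}$ for $\bar{x}$, so your value $s_0=1+\bar{x}$ and the rigidity of the doubled-corner configuration are right. The genuine gap is your disposal of case $(a)$ in $(iii)$: the symmetric four-plus-one arrangement is \emph{not} extremal there --- it is the \emph{minimum} of the relevant one-parameter family. To see the failure concretely, slide the free diamond along its side toward a corner so that a gap of length $\epsilon>0$ remains at the corner; the corner square must then cover legs $(\epsilon,\,l^{-1}(\epsilon))$ with $l^{-1}(\epsilon)\to\sqrt{2}$, and the resulting coverings are genuinely of type $(a)$ (the fifth square contains no corner) yet converge to the type-$(b)$ optimum as $\epsilon\to 0$. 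Hence the supremum over case $(a)$ equals $s_0\approx 2.072$, approached but not attained, and your claimed strict bound $\approx 2.047$ is false; $2.047$ is in fact the value of the symmetric configuration. The paper's Case 3 handles exactly this: it encodes the closing condition as the implicit equation $G(x_2,a)=0$ of (\ref{eq:impl}) on $\frac{1+\bar{x}}{2}\le x_2\le\bar{x}$, verifies (numerically) that $\frac{\mathop{d}^2 a}{\mathop{d} x_2^2}>0$, and uses $\left.\frac{\mathop{d} a}{\mathop{d} x_2}\right|_{x_2=(1+\bar{x})/2}=0$ (symmetry) to conclude that $a$ is strictly increasing up to $x_2=\bar{x}$, where the free square abuts the corner. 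This monotonicity is precisely the ``genuinely delicate'' step you flag, both for the inequality $S_{\bd}(5)\le s_0$ and for uniqueness; your plan replaces it with an extremality assumption that is the reverse of the truth.

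A second, smaller but real defect is the upper-bound direction of $(i)$: after deleting four corner squares, the remaining $n$ squares do \emph{not} cover the boundary of the concentric square of edge $b-\sqrt{2}$. A side square meeting $\bd(S)$ in a chord of length $\sqrt{2}$ is a diamond whose width at depth $\frac{\sqrt{2}}{2}$ is zero, so in place it covers nothing of the concentric boundary; mere deletion cannot work. The paper proceeds instead via a normalization/structure theorem: any boundary covering can be modified, without decreasing the size of $S$, so that each unit square meets $\bd(S)$ in a connected, mutually non-overlapping arc --- each side square in a segment of length exactly $\sqrt{2}$, each vertex square in an $L$-shape with legs $x$ and $l(x)$ --- and the numbers of side squares on the four sides can be balanced to $k$ or $k+1$; removing one $\sqrt{2}$-segment per side then yields $S_{\bd}(n+4)=S_{\bd}(n)+\sqrt{2}$. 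Your lower-bound surgery has the analogous repairable flaw (a square whose boundary arc straddles a cut loses the far part of its arc when the sectors are translated), and this same normalization is what makes the straddling subcases you defer in $(iii)$ actually routine; without it they are not.
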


\begin{figure}[ht]
\begin{center}
\includegraphics[width=0.52\textwidth]{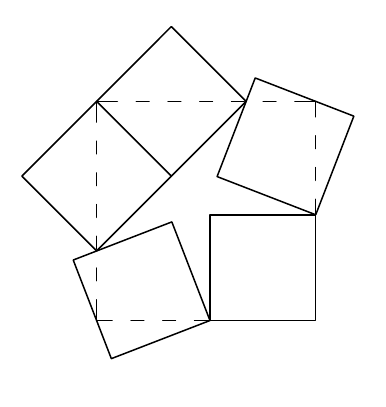}
\caption{Five unit squares covering the boundary of a square $S$ of edge length $\frac{1}{2}\sqrt{2}\sqrt{\sqrt{13-8\sqrt{2}}+1}+1\approx 2.072$. The boundary of $S$ not belonging to the edges of the covering unit squares is denoted by a dashed line.}
  \label{fig:5bdoptimal}
\end{center}
\end{figure}

In this paper we identify points with their position vectors. The Euclidean norm of a point $p \in \Re^2$ is denoted as $||p||$. In particular, the Euclidean distance of $p,q \in \Re^2$ is $||p-q||$. We use the notation $\inter(\cdot), \bd(\cdot)$ and $\conv(\cdot)$ for the \emph{interior}, \emph{boundary} and \emph{convex hull} of a set.

%gyuri %zsolti
The structure of the paper is as follows. %In Sections~\ref{sec:smallern} and \ref{sec:biggern} we prove Theorems~\ref{thm:smallern} and \ref{thm:biggern}, respectively. Finally, in Section~\ref{sec:bd} we prove Theorem~\ref{thm:bd}.
In Sections~\ref{sec:smallern}, \ref{sec:biggern}, and \ref{sec:bd}, we prove Theorems~\ref{thm:smallern},  \ref{thm:biggern}, and \ref{thm:bd}, respectively.

%\todo[inline]{ZS -- nekem kicsit furán hangzik ez az utolsó két sor. Két módosítási lehetőség:

%(a)

%In Sections~\ref{sec:smallern}, \ref{sec:biggern}, and \ref{sec:bd}, we prove Theorems~\ref{thm:smallern},  \ref{thm:biggern}, and \ref{thm:bd}, respectively.

%(b)

%We prove Theorem 1.$i$ in Section $i$ for $i=2,3,4$.}

\section{Proof of Theorem~\ref{thm:smallern}}\label{sec:smallern}

%Gyuri, az Observation beszurasa
We start with an observation that we will need several times.

\begin{claim}\label{obs1}
%Suppose that we have a triangle, the length of its one side is unit and the corresponding height is also one unit, and it is within a unit square. Then one side of the triangle is on one side of the unit square.
Suppose that one side of a triangle has unit length, its corresponding height is also one unit, and the triangle is inside a unit square. Then the unit side of the triangle coincides with a side of the square.
\end{claim}

%\todo[inline]{ZS -- javaslat Claim \ref{obs1} szövegének módosításáta:

%Suppose that one side of a triangle has unit length, its corresponding height is also one unit, and the triangle is inside a unit square. Then the unit side of the triangle coincides with a side of the square.}

\subsection{The proof of the case $n=2$}\label{subsec:nis2}
%Gyuri: ide beteszem a kis bizonyitast
%Biralo 3
Let $S$ be a square of size $\lambda \geq 1$, and assume that there are unit squares $S_1$ and $S_2$ such that $\bd(S) \subseteq S_1 \cup S_2$. We also assume that neither $S_1$ nor $S_2$ is a translate of $S$.
Let the vertices of $S$ be $v_1,v_2, v_3, v_4$ in counterclockwise order. Let $E_i = [v_i,v_{i+1}]$ (indices are taken modulo $4$). Note that as the diameters of $S_1$ and $S_2$ are $\sqrt{2}$, opposite vertices of $S$ belong to different $S_i$. Thus, without loss of generality, we may assume that $v_1, v_2 \in S_1$ and $v_3, v_4 \in S_2$. Then we have $E_1 \subseteq S_1$ and $E_3 \subseteq S_2$, like in Figure ~\ref{fig:3squaresnotation}.
Moreover, by Claim \ref{obs1}, $S_1$ does not intersect $E_3$ and  $S_2$ does not intersect $E_1$, implying that $S_i \cap \bd(S)$ is connected for $i=1,2$, and also that $S \subseteq S_1 \cup S_2$.

From now on, it suffices to prove that $S(2)=1.$

%Let the vertices of $S$ be $v_1,v_2, v_3, v_4$ in counterclockwise order. 
%Let $E_i = [v_i,v_{i+1}]$ (indices are taken modulo $4$). 
% By our assumptions, there is a side of $S$ which belongs entirely to $S_1$, and the % opposite side belongs entirely to $S_2$. 
% Without loss of generality, let us assume that $E_1 \subset S_1$.
%Then we may assume that $v_1, v_2$ lie in consecutive sides of $\bd(S_1)$, like in Figure ~\ref{fig:3squaresnotation}.

\begin{figure}[ht]
\begin{center}
\includegraphics[width=0.5\textwidth]{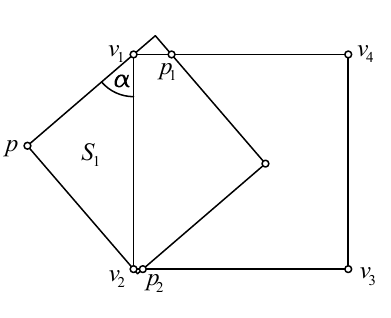}
\caption{Notation for the proof.}
  \label{fig:3squaresnotation}
\end{center}
\end{figure}

Let $p$ denote the vertex of $S_1$ separated from $S$ by the sideline through $E_1$. Let $p_1,p_2$ denote the intersection points of $\bd(S_1)$ with $E_4$ and $E_2$, respectively. Let $\alpha$ denote the angle of the triangle $\conv \{ p,v_1,v_2 \}$ at $v_1$, where, due to our conditions, $0 < \alpha < \frac{\pi}{2}$. Let $A_1(\alpha)$ and $A_2(\alpha)$ denote the lengths of $[p_1,v_1]$ and $[p_2,v_2]$, respectively. An elementary computation shows that
\begin{equation}\label{eq:A1A2}
A_1(\alpha) = \frac{1-\lambda \cos \alpha}{\sin \alpha}, \quad A_2(\alpha) = \frac{1-\lambda \sin \alpha}{\cos \alpha}.
\end{equation}
Then
%gyuri %zsolti
%\begin{equation}
%\lambda - A_1(\alpha)-A_2(\alpha)=\frac{\lambda(1+\cos \alpha \sin \alpha)}
%{\cos \alpha \sin \alpha} \geq \frac{(1-\sin \alpha)(1-\cos \alpha)}{\sin 
%\alpha \cos \alpha} > 0.
\begin{multline}\label{eq:mainforn2}
\lambda-A_{1}(\alpha)-A_{2}(\alpha) =\frac{\lambda(1+\cos\alpha\sin\alpha)-\cos\alpha-\sin\alpha}{\cos
\alpha\sin\alpha}\\
\geq \frac{\left(  1-\cos\alpha\right)  \left(  1-\sin\alpha\right)  }%
{\cos\alpha\sin\alpha}>0 .
\end{multline}
%\end{equation}
Thus, the length of the arc $S_1 \cap \bd(S)$ is strictly less than $2\lambda$. Since the same argument can be applied for $S_2 \cap \bd(S)$, and the perimeter of $S$ is $4\lambda$, we reach a contradiction.

\subsection{The proof of the case $n=3$}\label{subsec:nis3}

To handle this case we follow the same proof in a more elaborate form.
To do it, if $X \subset \Re^2$ is the union of two orthogonal closed segments, meeting at a common endpoint, then we say that $X$ is an \emph{$L$-shape}, and the two segments are called its \emph{legs}. The common endpoint of the legs is called the \emph{corner} of $X$, and the other two endpoints of the legs are called the \emph{endpoints of $X$}.

Let $0 < x < \sqrt{2}$. Let $l(x)$ denote the largest real number $y$ such that an $L$-shape $X$ with legs of lengths $x$ and $y$ is contained in a
% suitable 
unit square. If an $L$-shape $X$ with legs of lengths $x$ and $l(x)$ is contained in a unit square $S$, we say that $X$ is \emph{maximally embedded} in $S$.

\begin{lemma}\label{lem:mainfor3}
Let $X$ be an $L$-shape maximally embedded in a unit square $S$. Assume that the legs of $X$ are of lengths $x$ and $l(x)$, where $x > 1$. Then $l(x) < 1$, the endpoint $p$ of $X$ lying on the longer leg is a vertex of $S$, and the other endpoint and the corner of $X$ are points of the two sides of $S$ not containing $p$.
Furthermore, $l(x)=x-x\sqrt{x^2-1}$.
\end{lemma}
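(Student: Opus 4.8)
The plan is to place the unit square as $S=[0,1]^2$ and reduce the statement to an explicit one‑variable optimization. Since $S$ is convex, an $L$‑shape lies in $S$ if and only if its three tips—the corner $c$ and the two endpoints—lie in $S$, so I only need to control these three points. Using the symmetries of $S$ (rotations by multiples of $\frac{\pi}{2}$ and reflections), I may assume the longer leg (of length $x$) points in direction $(\cos\theta,\sin\theta)$ with $\theta\in[0,\frac{\pi}{2}]$, so that $c$ and $p=c+x(\cos\theta,\sin\theta)$ force $x\cos\theta\le 1$ and $x\sin\theta\le 1$. Since $x>1$, this confines $\theta$ to the interval $[\arccos\frac{1}{x},\arcsin\frac{1}{x}]$ around $\frac{\pi}{4}$, which is a genuine (nonempty, proper) subinterval precisely because $1<x<\sqrt2$. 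The shorter leg, of length $y$, is erected perpendicularly at $c$ in one of the two directions $\pm(\sin\theta,-\cos\theta)$; up to the diagonal reflection these are equivalent, so I analyze $q=c+y(\sin\theta,-\cos\theta)$.

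For this orientation, containment of $q$ gives $y\le (1-c_1)/\sin\theta$ and $y\le c_2/\cos\theta$, while containment of $p$ gives $c_1\le 1-x\cos\theta$ and $c_2\le 1-x\sin\theta$. Both upper bounds on $y$ improve as $c_1$ decreases and $c_2$ increases, so the optimal placement is forced to $c_1=0$ and $c_2=1-x\sin\theta$, whence the best $y$ at fixed $\theta$ equals $\min\{1/\sin\theta,\,(1-x\sin\theta)/\cos\theta\}$. Comparing $\cos\theta$ with $\sin\theta(1-x\sin\theta)$ and using $\sin\theta<1<x$ shows the second term is always the smaller, so the value reduces to $g(\theta)=(1-x\sin\theta)/\cos\theta=\sec\theta-x\tan\theta$, whose derivative $g'(\theta)=\sec\theta\,(\sin\theta-x)/\cos\theta$ is negative throughout. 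Hence $g$ is strictly decreasing on $[\arccos\frac{1}{x},\arcsin\frac{1}{x}]$ and is maximized at the left endpoint $\theta_0=\arccos\frac{1}{x}$, giving $l(x)=\sec\theta_0-x\tan\theta_0=x-x\sqrt{x^2-1}$.

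Finally I read off the extremal configuration at $\theta=\theta_0$. There $\cos\theta_0=\frac{1}{x}$, so $p=(x\cos\theta_0,\,1)=(1,1)$ is a vertex of $S$; the corner $c=(0,\,1-\sqrt{x^2-1})$ lies on the left side; and substituting back yields $q_2=0$, so $q$ lies on the bottom side. Thus $c$ and $q$ lie on the two sides of $S$ not containing $p$, exactly as claimed. The inequality $l(x)<1$ then follows from the closed form: writing $t=\sqrt{x^2-1}\in(0,1)$ and $x=\sqrt{1+t^2}$, one checks $(1+t^2)(1-t)^2<1$ on $(0,1)$.

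The step requiring the most care is the symmetry reduction: I must verify that every placement of the $L$‑shape in $S$ is equivalent, under a symmetry of $S$, to the single orientation treated above (covering both perpendicular directions of the short leg and both "slopes" of the long leg), so that no configuration is missed. The other delicate point is confirming that the optimum over $\theta$ is attained at the endpoint of the feasible interval rather than in its interior—this is exactly what forces $p$ into a vertex of $S$—which is what the monotonicity of $g$ establishes.
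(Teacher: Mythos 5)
Your proof is correct, but it takes a genuinely different route from the paper. The paper argues synthetically and variationally: it assumes without loss of generality that $p$, $q$ and the corner $c$ all lie on $\bd(S)$, locates which sides they can occupy, and then rules out every position of $p$ other than a vertex by small rotations of $X$ (about $c$ if $p$ is on the side opposite the corner's side, about $q$ or about the midpoint of $[p,q]$ if $p$ is on the side adjacent to it), each rotation producing an $L$-shape with a longer short leg and hence a contradiction with maximality; the formula $l(x)=x-x\sqrt{x^2-1}$ is then extracted from the forced configuration via the Pythagorean theorem and similar triangles. You instead set up coordinates in $[0,1]^2$, reduce by the dihedral symmetry of the square (your reduction is sound: rotations by multiples of $\frac{\pi}{2}$ normalize the long-leg direction into $[0,\frac{\pi}{2}]$, and the diagonal reflection does swap the two perpendicular choices for the short leg while preserving that interval), and solve a one-variable optimization of $g(\theta)=\sec\theta-x\tan\theta$ over the feasibility interval $[\arccos\frac{1}{x},\arcsin\frac{1}{x}]$. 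The trade-off: the paper's perturbation argument is coordinate-free and shorter, but its final computation is left terse and the uniqueness of the extremal configuration is implicit; your calculus route derives the formula and the extremal structure in one sweep, and it even yields slightly more than you state---at $\theta_0=\arccos\frac{1}{x}$ the constraint $c_1\le 1-x\cos\theta_0=0$ forces $c_1=0$, so since $g$ is strictly decreasing every maximal embedding (not merely some extremal one) must be the configuration you read off, which is exactly the structural claim of the lemma. One cosmetic remark: at a general $\theta$ the choice $c_1=0$ is a harmless normalization rather than forced (the binding bound on $y$ is $c_2/\cos\theta$), but this does not affect the argument, since forcing is only needed, and holds, at the optimal angle $\theta_0$.
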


\begin{proof}
The statement that $l(x) < 1$ is trivial. Let $q$ be the endpoint of $X$ on the leg of length $l(x)$, and $c$ be the corner of $X$.
Without loss of generality, we may assume that $p,q,c$ belong to the boundary of $S$. Since $x > 1$, it follows that $c$ is an interior point of a side $E_2$ of $S$. Thus, we also have that $q$ belongs to the interior of a side $E_1$ consecutive to $E_2$. Let $E_3$ be the other consecutive side of $E_2$. If $p$ is in the interior of the fourth side $E_4$, then rotating $X$ around $c$ we find an $L$-shape in $S$ whose shorter leg is longer than $l(x)$, a contradiction.

Assume that $p$ is in the interior of $E_3$. If, in addition, $p$ is not closer to $E_2$ than $q$ is, then we may rotate $X$ around $q$ and find an $L$-shape in $S$ with a longer short leg. 
%Biralo 5
%On the other hand, if $p$ is closer to $E_2$ than $q$ is, then we may rotate $X$ around %the midpoint of the segment $[p,q]$ and find a larger $L$-shape. 
On the other hand, if $p$ is closer to $E_2$ than $q$ is, then we may rotate $X$ around $p$ and find a larger $L$-shape. 
Thus, by the definition of $l(x)$, $p$ is the common point of $E_3$ and $E_4$, proving the first statement. Then the formula for $l(x)$ follows by the Pythagorean theorem and comparing the sides of two similar triangles.
\end{proof}

%Biralo 6,7,8,9, ez mint a 6. oldalon levo bizonyitasra vonatkozik, vagyis a Theorem 1.2.
%   tetel bizonyitasa az n=3 esetben, ennek
% reszbenatirtam a bizonyitast 

%zsolti
%Similarly like in Subsection~\ref{subsec:nis2},

In the proof we also need the following claim, which can be proved by elementary calculus and computation. We only sketch its proof, and we leave the exact computations to the reader.
%\todo[inline]{ZS -- javaslat a fenti mondat végére: computation, which we leave to the reader.}

\begin{claim}\label{obs2}
Let $L_1$ and $L_2$ be two parallel lines at distance $\sqrt{\varphi}$. Let $S$ be a unit square intersecting both $L_1$ and $L_2$. Then the total length of $S \cap (L_1 \cup L_2)$ is at most $2+\sqrt{2}-\sqrt{2+2\sqrt{5}} \approx 0.8701742630$.
\end{claim}

To prove Claim~\ref{obs2}, observe that if we move $S$ perpendicularly to the two lines, the total length of the intersection does not change. Thus, without loss of generality, we may assume that a vertex of $S$ lies on $L_1$. Then we can compute the length $\ell(\alpha)$ of $S \cap L_2$ as a function of the angle $\alpha$ of a suitably chosen diagonal of $S$ and $L_2$, where we may assume that $- \arccos \sqrt{\frac{\varphi}{2}} \leq \alpha \leq  \arccos \sqrt{\frac{\varphi}{2}}$. A numeric computation shows that $\ell(\alpha)$ is not maximal if $|\alpha|$ is `large', and in the opposite case it is a concave function of $\alpha$, implying, combined with the observation that $\ell(\alpha)$ is an even function, that it is maximal at $\alpha = 0$.
%\todo[inline]{ZS -- javaslat: "Let" helyett "To prove the theorem for $n=3$, let"} Zsolti: picit mas szoveggel irtam

Now we prove Theorem~\ref{thm:smallern} for $n=3$. Let $S$ be a square of size $\sqrt{\varphi}$, and assume that there are unit squares $S_1,S_2,S_3$ such that $\bd(S) \subseteq S_1 \cup S_2 \cup S_3$. 
Let the vertices of $S$ be $v_j$, $j=1,2,3,4$, in counterclockwise order. Let $E_i = [v_i,v_{i+1}]$. 
%By our assumptions, there is a side of $S$ which belongs entirely to one of the unit 
%squares, say we assume that $E_1 \subset S_1$.
Note that as the diameters of the $S_i$ are $\sqrt{2}$, no $S_i$ covers opposite vertices of $S$. On the other hand, by the pigeonhole principle, there is some $S_i$ that covers at least two vertices of $S$. Thus, without loss of generality, we may assume that $v_1, v_2 \in S_1$, and $v_3, v_4 \in S_2 \cup S_3$.
Then, by convexity, $E_1 \subset S_1$.

First, consider the case that $v_3, v_4$ are covered by the same square, say $v_3, v_4 \in S_2$. We estimate how long parts of $E_2 \cup E_4$ are covered by $S_1$, $S_2$ and $S_3$. To estimate the length $S_1 \cap (E_2 \cup E_4)$, we can use the formulas from (\ref{eq:A1A2}) with $\lambda = \sqrt{\varphi}$, and obtain, by elementary calculus, that it is at most
\begin{multline*}
\max \left\{ \frac{1-\sqrt{\varphi} \cos \alpha}{\sin \alpha} + \frac{1-\sqrt{\varphi} \sin \alpha}{\cos \alpha} : \arccos \frac{1}{\sqrt{\varphi}} \leq \alpha \leq \arcsin \frac{1}{\sqrt{\varphi}} \right\} = \\ = 2\sqrt{2}-2\sqrt{\varphi} \approx 0.2843878263.
\end{multline*}
We can use the same quantity to estimate the length of $S_2 \cap (E_2 \cup E_4)$. Note that these estimates also imply that $S_1 \cup S_2$ covers neither $E_2$ nor $E_4$. Thus, to estimate the length of $S_3 \cap (E_2 \cap E_4)$, we can apply Claim~\ref{obs2}, leading to a contradiction, as the total length of $E_2 \cup E_4$ is $2\sqrt{\varphi} \approx 2.544039299$.

Next, consider the case that $v_3$ and $v_4$ are covered by different squares, say $v_3 \in S_3$ and $v_4 \in S_2$.  If $S_2$ intersects $E_2$ and $S_3$ intersects $E_4$, then we may use an argument similar to the previous one, applying Claim~\ref{obs2} to estimate the lengths of $S_2 \cap (E_2 \cup E_4)$ and $S_3 \cap (E_2 \cup E_4)$. Thus, without loss of generality, we may assume that the only sides of $S$ that $S_2$ intersects are $E_3$ and $E_4$. But then $E_2$ is covered by $S_1 \cup S_3$. If $S_3$ intersects $E_4$, then the length of $S_3 \cap E_2$ is at most $0.8701742630$ and the length of $S_1 \cap E_2$ is at most $0.2843878263$; a contradiction. Hence, it follows that the only sides of $S$ that $S_4$ intersects are $E_2$ and $E_3$. But then, by convexity, each $S_i$ intersects $\bd(S)$ in a connected polygonal arc. In particular, 

the length of $E_3 \cap S_2$ is at most $l \left( \sqrt{\varphi} - A_1(\alpha) \right)$ and that of $E_3 \cap S_3$ is at most $l \left( \sqrt{\varphi} - A_2(\alpha) \right)$.
Hence, it suffices to show that the next inequality holds, and equality holds only if $\alpha=\arccos \frac{1}{\sqrt{\varphi}}$:
\[
l \left( \sqrt{\varphi} - A_1(\alpha) \right) + l \left( \sqrt{\varphi} - A_2(\alpha) \right) \leq \sqrt{\varphi}.
\]
Let $f(\alpha)$ denote the expression on the right-hand side of the above inequality. Note that $f(\alpha)= f \left( \frac{\pi}{2}- \alpha\right)$ for all $\alpha$, and it is sufficient to investigate this function on the interval $\left[ \arccos \frac{1}{\sqrt{\varphi}}, \frac{\pi}{4} \right]$. On the other hand, we have that
$f\left(\arccos \frac{1}{\sqrt{\varphi}} \right) > f(\alpha)$ if $\frac{7\pi}{32} \leq \alpha \leq \frac{\pi}{4}$, and $f'(\alpha) < 0$ if $\arccos \frac{1}{\sqrt{\varphi}} \leq \alpha \leq \frac{7\pi}{32}$ (see Figure~\ref{fig:Maple}), showing that $f(\alpha) < f\left(\arccos \frac{1}{\sqrt{\varphi}} \right)$ for any $\alpha > \arccos \frac{1}{\sqrt{\varphi}}$. Thus, the assertion follows from the observation that $f\left(\arccos \frac{1}{\sqrt{\varphi}} \right) = \sqrt{\varphi}$.

\begin{figure}
\centering
\begin{minipage}{.5\textwidth}
  \centering
  \includegraphics[width=.9\textwidth]{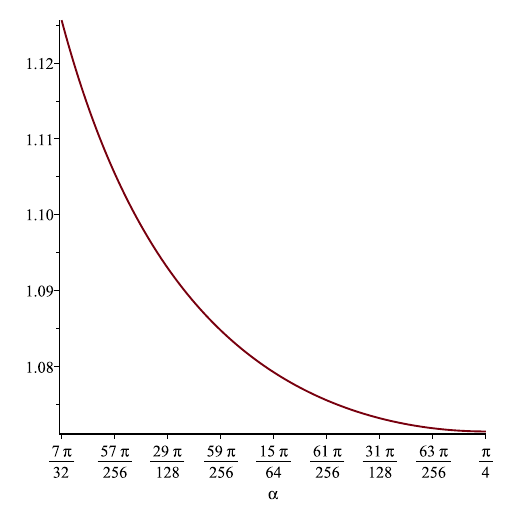}
  %\label{fig:Maple1}
\end{minipage}%
\begin{minipage}{.5\textwidth}
  \centering
  \includegraphics[width=.9\textwidth]{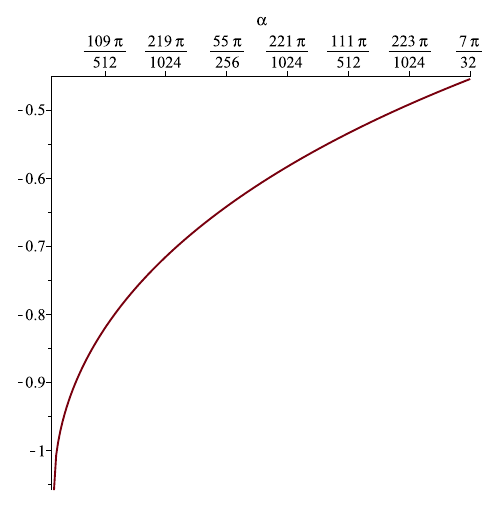}
  %\label{fig:Maple2}
\end{minipage}
\caption{
%zsolti
Left-hand side panel: the graph of $f(\alpha)$. Right-hand side panel: The graph of the numerator of $f'(\alpha)$; the denominator of $f'(\alpha)$ is $\sqrt{ \left( \sqrt{\varphi} -A_1(\alpha) \right)^2-1 }$, which is positive on the investigated interval. 
%gyuri
Computation shows that the numerator of the function $f'$ can be extended to a continuous function on the given interval.}
\label{fig:Maple}
\end{figure}

\section{Proof of Theorem~\ref{thm:biggern}}\label{sec:biggern}

Consider a family $\mathcal{F} = \{ S_i:i=1,2,\ldots,5\}$ of unit squares that cover a square $S$ of edge length $a \geq 2$. For brevity, we call the squares $S_i$ \emph{items}. To prove the theorem by contradiction, assume that either $a > 2$, or $a=2$ and no item has sides parallel to those of $S$.

We denote the vertices of $S$ as $v_1, v_2, v_3, v_4$ in counterclockwise order. For $i=1,2,3,4$ we denote the midpoint of the side $[v_i,v_{i+1}]$ by $m_i$, where the indices are taken modulo $4$. We denote the center of $S$ by $c$. We call the points $v_i, m_j$ and $c$ \emph{important points}. Our investigation is based on analyzing which important point belongs to which item.

%\begin{figure}[ht]
%\begin{center}
%\includegraphics[width=0.6\textwidth]{square1.png}
%\caption{The covered square (i.e. the board), the important lines and
%important points}
%\label{board}
%\end{center}
%\end{figure}

First, note that if $a > 2$, no item can cover two points from amongst the vertices and center of $S$. The same statement also holds for $a=2$, provided that no item has sides parallel to those of $S$. Thus, every item covers exactly one of these five points. We call an item covering a vertex a \emph{vertex item}, and the one covering the center $c$ of $S$ the \emph{central item}. In addition, we observe that since the diameter of the items is $\sqrt{2}$, any item covers at most two (and at least one) of the important points. We label the items in the way to ensure that $c \in S_5$ and $v_i \in S_i$ for $i=1,2,3,4$.

We distinguish three cases. In the proof we often use the function $l(x)$ determined in Lemma~\ref{lem:mainfor3}. We observe that this function strictly decreases on the interval $x \in [1,\sqrt{2}]$, with $l(1)=1$ and $l(\sqrt{2})=0$. Furthermore, the function $x+l(x)$ also strictly decreases on this interval.

\smallskip

\textbf{Case 1:}
\emph{$S_5$ intersects $\bd(S)$ in at most one point.}

In this case $\bd(S)$ is covered by the four vertex items. We use Lemma~\ref{lem:mainfor3}, and conclude that each vertex item
%gyuri
covers an $L$-shape of length
at most $x+l(x)$ for some $1 \leq x \leq \sqrt{2}$, with $l(x)=x-x\sqrt{x^2-1}$. Since $\sqrt{2} \leq x+ l(x) \leq 2$ holds on this interval, with equality on the right if and only if the item is a homothetic copy of $S$, the assertion readily follows.

\smallskip

\textbf{Case 2:}
%gyuri
\emph{$S_5$ contains two important points.}

Without loss of generality, we may assume that $m_4 \in S_5$.
Then the point $m_2$ belongs to $S_2$ or $S_3$. Without loss of generality, we may assume that $m_2 \in S_3$. Then $m_3 \notin S_3$, implying that $m_3 \in S_4$
%zsolti
(see Figure~\ref{fig:cases23}a).

We observe that the segment $[c,m_3]$ is covered by $S_4 \cup S_5$. Indeed, since $v_1 \in S_1$ and $v_2 \in S_2$, no interior point of $[c,m_3]$ belongs to $S_1 \cup S_2$. On the other hand, if $S_3$ intersects $[c,m_3]$ in a nondegenerate segment, then, since $[m_2,v_3 ] \subset S_3$, $S_3$ contains a triangle with edge length $1$ and corresponding height $1+\varepsilon$ for some $\varepsilon > 0$. Since no parallelogram of unit area contains a triangle of area strictly greater than
%gyuri
$1/2$, this is a contradiction, showing that $[c,m_3] \subset S_4 \cup S_5$. In addition, by convexity, $[v_4,m_4] \not\subset S_4 \cup S_5$ implies that $[v_4,m_4] \subset S_1 \cup S_4$, which contradicts the argument in Case 1. Thus, we have that the boundary of the square $\conv \{ v_4,m_4,c,m_3 \}$ is covered by $S_4 \cup S_5$, and the assertion follows from Theorem~\ref{thm:smallern}.

\begin{figure}[ht]
\begin{center}
\includegraphics[width=0.9\textwidth]{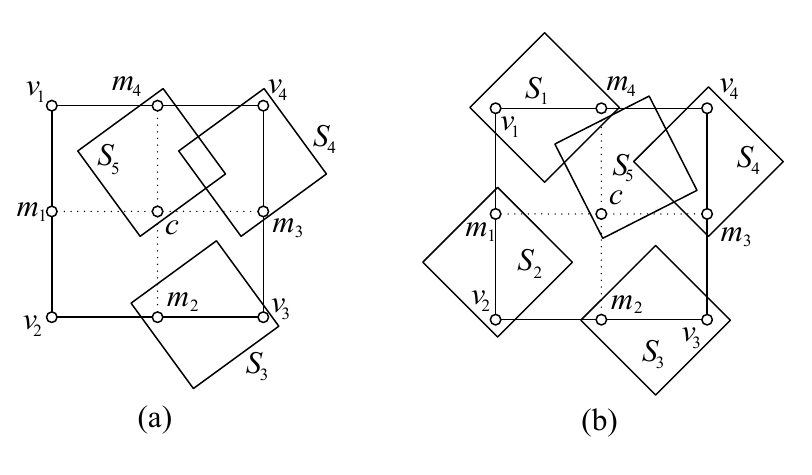}
%\caption{Illustrations for the proof of Theorem~\ref{thm:biggern} in Cases 2 and 3. Panel (a): an illustration for Case 2. Panel (b): an illustration for Case 3.}
\caption{Illustrations for the proof of Theorem~\ref{thm:biggern} in Cases 2 and 3. Panel (a): Case 2, Panel (b): Case 3.}
\label{fig:cases23}
\end{center}
\end{figure}

%\todo{az ábra korábbi aláírása még a fájlban van, könnyen vissza\-rakható ha mégis az a jobb.}

\smallskip

\textbf{Case 3}: \emph{$S_5$ intersects
%gyuri
$\bd(S)$ in a nondegenerate segment, but it contains only one important point, namely $c$.}

Since no $S_i$ can contain more than two important points, we may assume that each $S_i$, apart from $S_5$, contains exactly two of them. More specifically, without loss of generality, we may assume that $m_4 \in S_1$, $m_1 \in S_2$, $m_2 \in S_3$ and $m_3 \in S_4$ (see Figure~\ref{fig:cases23}b). If $S_5$ does not intersect $\bd(S) \setminus \bigcup_{i=1}^4 S_i$, then $\bd(S)$ is covered by the four vertex items, and we may apply the argument in Case 1. Thus, without loss of generality, we may assume that $S_5 \cap [v_4,m_4] \neq \emptyset$, implying that $S_5 \cap [c,m_2]$ has length strictly less than $\sqrt{2}-a/2$. Similarly, $S_4 \cap [v_3,m_3]$ has length at most $\sqrt{2}-a/2$. Since $[m_1,v_2] \subset S_2$, it follows that the remaining part of $[c,m_2] \cup [v_3,m_3]$ is covered by $S_3$. In particular, we have that the length of $S_3 \cap \bd(\conv \{ m_2,v_3,m_3,c \})$ is strictly greater than $3a/2-2(\sqrt{2}-a/2)=5a/2-2\sqrt{2} > a$. This contradicts the observation after (\ref{eq:mainforn2}), stating that if $S'$ is a square of edge length $\lambda > 1$, then no unit square covers an arc of $\bd(S')$ of length greater than
%gyuri
$2\lambda$, and the same statement holds if $\lambda=1$ and the covering square is not a translate of $S'$.

\section{Proof of Theorem~\ref{thm:bd}}\label{sec:bd}

Consider a family $\mathcal{F}= \{ S_1, \ldots, S_n \}$ of $n$ unit squares, with $n \geq 4$, that cover the boundary of a square $S$ of edge length $a > \sqrt{2}$. Then no element of $\mathcal{F}$ covers points from opposite sides of $S$.

First, we prove $(i)$.
Assume that a square $S_i$ covers parts of two consecutive sides of $S$, but not the common vertex $v$ of the sides. Let $q_1,q_2$ be the farthest points of $S_i$ from $v$ on these two sides.  Then $||q_i - v || < \sqrt{2}$ for $i=1,2$. Let the connected component of $\bd(S) \setminus S_i$ containing $v$ be $\Gamma_1$, and the other one be $\Gamma_2$. Since $\bd(S) \subset \bigcup \mathcal{F}$, there is another square $S_j$ that contains $v$. If $S_j$ intersects $\Gamma_2$ at a point $p$, say on the side containing $q_1$, then $[p,v] \subseteq S_j$. Thus, $S_i$ can be moved to cover the segment $[v,q_2]$. If $S_j$ does not intersect $\Gamma_2$, then $S_i$ and $S_j$ can be moved to cover $[v,q_1]$ and $[v,q_2]$, respectively. Thus, in both cases the covering can be modified so that the intersections of $S_i$ and $S_j$ with $\bd(S)$ are connected. Applying this process repeatedly, we may assume that each element of $\mathcal{F}$ intersects $\bd(S)$ in a connected arc. Namely, each of those intersections with $\bd(S)$ is a segment or an $L$-shape (for its definition, see Subsection~\ref{subsec:nis2}). If a square intersects in a segment, we may assume that the length of the segment is $\sqrt{2}$. Furthermore, by Lemma~\ref{lem:mainfor3}, if it is an $L$-shape, we may assume that its legs are of length $x$ and
%gyuri
$l(x)=x-x\sqrt{x^2-1}$ for some $1 \leq x \leq \sqrt{2}$. 

Let the vertices of $S$ be
%gyuri
$v_1, v_2, v_3, v_4$ in counterclockwise order on its boundary.
Note that $l(x)=x -x \sqrt{x^2-1}$ is a strictly decreasing bijection between the intervals $[1,\sqrt{2}]$ and $[0,1]$. Thus, if for some squares $S_i \cap \bd(S)$ and $S_j \cap \bd(S)$ overlap, we may move the elements of $\mathcal{F}$ slightly so that the interiors of the elements of the modified family already cover $S$. Hence, in this case, by slightly enlarging the configuration, we obtain a larger square whose boundary is covered with $n$ unit squares. Thus, in the following we assume that the parts of $\bd(S)$ covered by the $S_i$ are mutually non-overlapping polygonal curves.
%\todo{biztos hogy a nem definiált "arcs" jó szó?}
In particular, for each vertex $v_1, v_2, v_3, v_4$ of $S$, we may assign %gyuri
a square $S_i$, respectively, covering $v_i$ so that it intersects $\bd(S)$ in segments of lengths $x_i$ and $l(x_i)$, and every other element of $S$ intersects a side of $S$ in a segment of length $\sqrt{2}$. We call $S_1, \ldots, S_4$ \emph{vertex squares}, and the other squares \emph{side squares}. Note that it may happen that some $v_i$ belongs to more than one square: it can happen when $x_i=\sqrt{2}$ for some $i$, in which case $v_i$ belongs to a further element of $\mathcal{F}$.

We show that the vertex squares can be chosen so that on each side of $S$ there are $k$ or $k+1$ side squares for some nonnegative integer $k$. Indeed, let there be $k_i$ side squares on the side 
%gyuri
$[v_i,v_{i+1}]$ and $k_j$ side squares on the side 
%gyuri
$[v_j,v_{j+1}]$,
%\todo{subscript i changed to j}
with $k_j \leq k_i$. Then the length of the sides is $\sqrt{2} k_i+s_1+s_2=\sqrt{2} k_j+s_3+s_4$ for some $s_1,s_2,s_3,s_4 \in [0,\sqrt{2}]$. Thus, $k_i-k_j \leq 2$, with equality if and only if $s_1=s_2=0$ and $s_3=s_4=\sqrt{2}$. In particular, if $k_i-k_j=2$, then the edge length of $S$ is an integer multiple of $\sqrt{2}$. If these sides of $S$ are opposite sides ($j=i+2$ mod $4$), then each $v_i$ belongs to more than one element of $\mathcal{F}$, and each side consists of some segments of length $\sqrt{2}$, in which case our statement is trivial by the equality of the lengths of the sides of $S$. If the above two sides of $S$ are consecutive, then, by our conditions, the length of the sides of $S$ are integer multiples of $\sqrt{2}$, which yields that also the fourth vertex of $S$ belongs to two elements of $\mathcal{F}$, and our statement follows.

From now on we assume that for 
%gyuri
$i=1,2,3,4$ the side $E_i=[v_i,v_{i+1}]$ is covered by $k$ or $k+1$ segments of length $\sqrt{2}$, and two legs of $L$-shapes, whose lengths are denoted by $y_i$ and $z_i$ such that the one with length $y_i$ contains $v_i$, and the other one contains $v_{i+1}$, implying also that $\{ z_{i-1}, y_i \} = \{ x, l(x) \}$ for some $x \in [1,\sqrt{2}]$. From this, it also follows that for any $n \geq 4$, we have $S_{\bd}(n+4) = S_{\bd}(n)+\sqrt{2}$.

To prove $(ii)$, observe that $x+l(x) \leq 2$ holds for any $1 \leq x \leq \sqrt{2}$. Thus, the perimeter of $S$ is at most $8+(n-4)\sqrt{2}$. On the other hand, if $n$ is divisible by $4$, this value can be attained if each vertex of $S$ is the vertex of exactly one of the $S_i$, and the remaining parts of the sides are covered by diagonally placed unit squares.

Now we prove $(iii)$. In the proof we set $n=5$, and use the notation of the proof of $(i)$.

Without loss of generality, we assume that the only side square intersects $[v_1,v_2]$ in a segment of length $\sqrt{2}$. We distinguish three cases, depending on whether $y_1$ and $z_1$ are equal to $x$ or $l(x)$ for some $x \in [1,\sqrt{2}]$. We assume that the sidelength of $S$ is at least $\frac{1}{2}\sqrt{2}\sqrt{\sqrt{13-8\sqrt{2}}+1} + 1$.

\smallskip

\textbf{Case 1}:
\emph{$y_1=x_1$ and $z_1=x_2$ for some $x_1, x_2 \in [1,\sqrt{2}]$.}

Then $||v_2-v_1|| \geq 2+\sqrt{2}$. On the other hand, since there is some other side $E_i$ of $S$ consisting of only two segments of lengths $l(x_i), l(x_{i+1})$ at most $1$ each, and since $E_1$ and $E_i$ are of equal lengths, we reach a contradiction.

\smallskip

\textbf{Case 2}:
\emph{$y_1=x_1$ and $z_1=l(x_2)$ for some $x_1,x_2 \in [1,\sqrt{2}]$.}

If some other side $E_i$ of $S$ contains two segments of lengths $l(x_3),l(x_4)$ of length at most $1$, then the length of $E_1$ is at least $1+\sqrt{2}$ and the length of $E_i$ is at most $2$, a contradiction. Thus, every side of $S$ contains exactly one segment of length $l(x)$ for some $x \in [1,\sqrt{2}]$. More specifically, for some $x_1,x_2,x_3,x_4 \in [1,\sqrt{2}]$, we have $y_i=x_i$ and $z_i=l(x_{i+1})$. In particular, in this case we have $x_1 + \sqrt{2}+l(x_2)=l(x_3) + x_2$. Note that $x_1+\sqrt{2}+l(x_2) \geq 1+\sqrt{2} \geq x_2 + l(x_3)$, with equality if and only if $x_1=1, x_2=\sqrt{2}$ and $x_3=1$. But then $x_3 +l(x_4) \leq 2$, contradicting the condition that $E_1$ and $E_3$ are of equal length.

\smallskip

\textbf{Case 3}:
\emph{$y_1=l(x_4)$ and $z_1=l(x_1)$ for some $x_1,x_4 \in [1,\sqrt{2}]$.}

In the following we set the value $\bar{x}= \frac{1}{2}\sqrt{2}\sqrt{\sqrt{13-8\sqrt{2}}+1}\approx1.072$.
At this point it is important to note that $\bar{x}$ is the unique solution of the equation $x+1=l(x)+\sqrt{2}$ because, on applying Lemma~\ref{lem:mainfor3},
the function $x-l(x) = x\sqrt{x^2-1}$ is strictly increasing on the entire interval $[1,\sqrt{2}]$.

By our conditions, there is a unique side of $S$, say $E'$, whose length is $x_i+x_j$ for some $x_i,x_j \in [1,\sqrt{2}]$. 

%\todo{a következő bekezdés több helyen módosítva.}

Assume first that $E'$ is $E_2$ or $E_4$, say $E'=E_2$. Then the lengths of $E_1, \ldots, E_4$ are $\sqrt{2}+l(x_1)+l(x_4)$, $x_1+x_2$, $l(x_2)+x_3$ and $l(x_3)+x_4$, respectively (all equal). By the construction exhibited in Figure \ref{fig:5bdoptimal}, which provides a lower bound on $S(5)$, $S$ has edge length at least $\bar{x}+1$, whereas all $x_i$ are at most $\sqrt{2}$. Thus, $l(x_2),l(x_3) \geq \bar{x}+1-\sqrt{2} = l(\bar{x})$. 
However, $l(x) > l(\bar{x})$ implies $x < \bar{x}$, therefore we have
$x_2, x_3 \leq \bar{x}$, from which we conclude $l(x_2)+x_3 \leq 1+\bar{x}$, implying the assertion.

Assume now that $E'=E_3$. Then the lengths of $E_1, \ldots, E_4$ are $\sqrt{2}+l(x_1)+l(x_4)$, $x_1+l(x_2)$, $x_2+x_3$ and $l(x_3)+x_4$, respectively.
By our assumptions, we have $l(x_2), l(x_3) \geq \bar{x}+1-\sqrt{2}$, implying that $x_2, x_3 \leq \bar{x}$. %Since $x_1+l(x_2) = x_2+x_3$, it follows that $x_1 \geq x_3$. It follows similarly that $x_4 \geq x_2$. Thus, $l(x_1)+l(x_4) \leq l(x_2)+l(x_3)$
Thus, denoting the edge length of $S$ by $a$, we obtain the equality
\begin{equation}\label{eq:impl}
l(a-l(x_2))+\sqrt{2}+l(a-l(a-x_2))-a = ||v_2-v_1||-a = 0.
\end{equation}
Consider the left-hand side in (\ref{eq:impl}) as the function $G(x_2,a)$. Assuming $x_3 \leq x_2$ without loss of generality, we obtain the inequalities $\frac{1+\bar{x}}{2} \leq x_2 \leq \bar{x}$ and $1+\bar{x} \leq a \leq 2x_2$. 
%\todo{eredetleg itt $1+2x_2$ volt.}
We show that within this domain, the function $a$, defined by the condition $G(x_2,a)=0$, is maximal if $x_2=\frac{1+\bar{x}}{2}$ or $x_2 = \bar{x}$. To show it, we observe that, by numeric computation, the second derivative $\frac{\mathop{d}^2 a}{\mathop{d} x_2^2}$ is positive, see Figure~\ref{fig:seconddiff}. We remark that this derivative can be computed by the expression
\[
\frac{\mathop{d}^2 a}{\mathop{d} x_2^2} = -\frac{G''_{x_2x_2} \left( G'_a \right)^2 - 2 G''_{x_2a} G'_{x_1} G'_a + G''_{aa} \left( G'_{x_2} \right)^2}{\left( G'_a \right)^3}.
\]

\begin{figure}[ht]
\begin{center}
\includegraphics[width=0.7\textwidth]{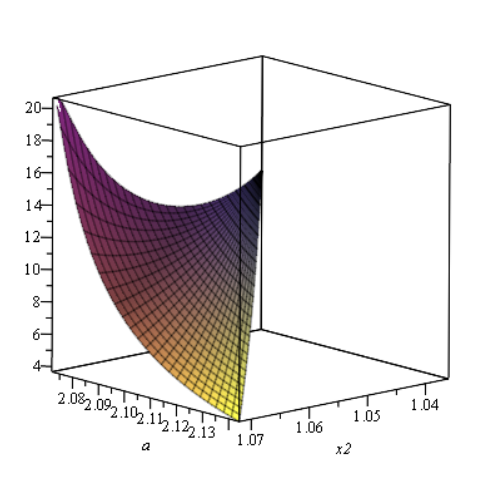}
\caption{The second derivative of $a$ as a function of $x_2$.}
\label{fig:seconddiff}
\end{center}
\end{figure}

Thus, it follows that $\frac{\mathop{d} a}{\mathop{d} x_2}$ is minimal if $x_2 = \frac{1+\bar{x}}{2}$. Using the symmetry of the configuration, it follows that $\left. \frac{\mathop{d} a}{\mathop{d} x_2} \right|_{x_2=(1+\bar{x})/2} =0$, implying that $a$, as a function of $x_2$, is strictly increasing.
This completes the proof of the theorem.

\section{Acknowledgments}
The authors express their gratitude to K. Bezdek for fruitful discussions related to this problem, and to J. Sgall for his comments on the next-to-final version.

\end{document}